\numberwithin{equation}{section}
\newtheorem{Theorem}{\sc Theorem}
\newtheorem{Definition}[Theorem]{\sc Definition}
\newtheorem{Proposition}[Theorem]{\sc Proposition}
\newtheorem{Lemma}[Theorem]{\sc Lemma}
\newtheorem{Corollary}[Theorem]{\sc Corollary}
\newcommand{\eps}{\varepsilon}
\def\sqr#1#2{{
        \vcenter{
            \vbox{\hrule height.#2pt
                \hbox{\vrule width.#2pt height#1pt \kern#1pt
                    \vrule width.#2pt
                }
                \hrule height.#2pt
            }
        }
}}
\def\div{\mathop{\rm div}\nolimits}
\def\R{\mathbb{R}}
\def\N{\mathbb{N}}
\def\dx{{\rm d}x}
\def\dy{{\rm d}y}
\def\lista#1
\begin{document}
\title{ Differential inclusion systems with fractional competing operator and multivalued fractional convection term}
\author{
Jinxia Cen\footnote{School of Mathematical Sciences, Zhejiang Normal University, Jinhua 321004, P.R. China. E-mail: jinxcen@163.com},\; 
Salvatore A. Marano\footnote{Dipartimento di Matematica e Informatica, Università degli Studi di Catania, Viale A. Doria 6, 95125 Catania, Italy. E-mail: marano@dmi.unict.it},\; 
Shengda Zeng\footnote{Corresponding author. National Center for Applied Mathematics in Chongqing and School of Mathematical Sciences, Chongqing Normal University, Chongqing 401331, P.R. China. E-mail: zengshengda@163.com}
}
%
\date{}
\maketitle
\begin{abstract}
In this work, the existence of solutions (in a suitable sense) to a family of inclusion systems involving fractional, possibly competing, elliptic operators, fractional convection, and homogeneous Dirichlet boundary conditions is established. The technical approach exploits Galerkin's method and a surjective results for multifunctions in finite dimensional spaces as well as approximating techniques.    
\end{abstract}
\noindent {\bf Keywords:} Differential inclusion system, fractional competing operator, fractional convection, Dirichlet boundary condition, generalized solution, Galerkin's method.\\
{\bf MSC 2020:} 35H30, 35J92, 35D30.
\section{Introduction}
%
Let $\Omega$ be a bounded domain in $\R^N$, $N\geq 3$, with a smooth boundary $\partial \Omega$, let $\mu_1,\mu_2\in\R$, and let $F_1,F_2:\Omega\times\R^2\times\R^{2N}\to 2^\R$ be two compact convex-valued multifunctions. Consider the differential inclusion system
\begin{equation}\label{prob}
\left\{\begin{array}{lll}
(-\Delta)_{p_1}^{s_1}u_1+\mu_1 (-\Delta)_{q_1}^{t_1}u_1 \in F_1(x,u_1,u_2,
D^{r_1} u_1,D^{r_2} u_2) & \mbox{ in $\Omega$},\\
(-\Delta)_{p_2}^{s_2}u_2+\mu_2 (-\Delta)_{q_2}^{t_2}u_2 \in F_2(x,u_1,u_2,
D^{r_1} u_1,D^{r_2} u_2) & \mbox{ in $\Omega$},\\
u_1=u_2=0 & \mbox{ in $\mathbb R^N\backslash\Omega$},
\end{array}\right.
\end{equation}
where
\begin{itemize}
\item[$({\rm H}_1)$] $0<t_i<r_i<s_i\le 1$ and $1<q_i<p_i<\frac{N}{s_i}$ for each $i=1,2$.
\end{itemize}
The symbol $(-\Delta)_p^s$, with $p>1$ and $0<s<1$, denotes the fractional $p$-Laplacian, defined by setting, provided $u$ is smooth enough,
$$(-\Delta)^s_p u(x):=2\lim_{\varepsilon\to 0^+}\int_{\R^N\setminus B_\varepsilon(x)}
\frac{|u(x)-u(y)|^{p-2}(u(x)-u(y))}{|x-y|^{N+ps}}\,\dy,\quad x\in\R^N$$
with $B_\varepsilon(x):=\{z\in \mathbb R^N\,\mid\,\|z-x\|_{\mathbb R^N}<\varepsilon\}$.
When $s=1$ it becomes the classical $p$-Laplacian, namely
$$-\Delta_p u:=-\div(|\nabla u|^{p-2}\nabla u).$$
Moreover, $D^s u$ indicates the \textit{distributional Riesz fractional gradient} of $u$ in the sense of \cite{SS1,SS2}. If $u$ appropriately decays and is sufficiently smooth then, setting
$$c_{N,s}:=-\frac{2^{s}\,\Gamma\left(\frac{N+s+1}{2}\right)}
{\pi^\frac{N}{2}\,\Gamma\left(\frac{1-s}{2}\right)}\, ,$$ 
one has \cite[pp. 289 and 298]{SS2} 
\begin{equation*}
D^s u(x):=c_{N,s}\lim_{\varepsilon\to 0^+}\int_{\R^N\setminus B_\varepsilon(x)}
\frac{u(x)-u(y)}{|x-y|^{N+s}}\,\frac{x-y}{|x-y|}\dy,\quad x\in\R^N.
\end{equation*}
The right-hand sides $F_1$ and $F_2$ satisfy the conditions below, where, to avoid cumbersome formulae, we shall write
\begin{equation}\label{defpstar}
y:=(y_1,y_2),\quad z:=(z_1,z_2),\quad p_i^*:=\frac{Np_i}{N-s_ip_i},\quad i=1,2.    
\end{equation}
\begin{itemize}
\item[$({\rm H}_2)$] $x\mapsto F_i(x,y,z)$ is measurable on $\Omega$ for all $(y,z)\in\R^2\times\R^{2N}$ and $(y,z)\mapsto F_i(x,y,z)$ is upper semi-continuous for a.e. $x\in\Omega$. 
\item[$({\rm H}_3)$] There exist $m_i>0$, $\delta_i\in L^{(p_i^*)'}(\Omega)$, $i=1,2$, such that
\begin{align*}
\sup_{w_i\in F_i(x,y,z)}|w_i|
\le & m_1 \left(|y_1|^\frac{p_1^*}{(p_i^*)'}+|y_2|^\frac{p_2^*}{(p_i^*)'}
+|z_1|^\frac{p_1}{(p_i^*)'}+|z_2|^\frac{p_2}{(p_i^*)'}\right)+\delta_i(x)
%
\end{align*}
a.e. in $\Omega$ and for all $(y,z)\in\R^2\times\R^{2N}$.
\item[$({\rm H}_4)$] There are $M_i,M_i'>0$, $\sigma_i\in L^1(\Omega)_+$, $i=1,2$, fulfilling
\begin{align*}
& w_i y_i\le M_i(|y_1|^{p_1}+|y_2|^{p_2})+M_i'(|z_1|^{p_1}+|z_2|^{p_2})+\sigma_i(x)
%
\end{align*} 
a.e. in $\Omega$ and for all $(y,z)\in\R^2\times\R^{2N}$, $w_i\in F_i(x,y,z)$.
\end{itemize}
The involved differential operators are of the type 
\begin{equation*}
A_\mu(u):= (-\Delta)_{p}^{s}u+\mu (-\Delta)_{q}^{t}u,\quad u\in W^{s,p}_0(\Omega),    
\end{equation*}
where $\mu\in\R$, $0<t\leq r\leq s\le 1$, $1<q<p<\frac{N}{s}$, while convection comes from the presence of fractional gradients $D^r u$ at right-hand sides. $A_\mu$ exhibits different behaviors depending on the values of $t,s\in (0,1]$. Precisely, if $t=1$, then $t=r=s=1$, Problem \eqref{prob} falls inside the local framework, and  has been already investigated in some recent works; see, e.g., \cite{GMM-JMAA} for single-valued reactions and \cite{Mo1,CMZ-AML} as regards multi-valued ones. Moreover, the nature of $A_\mu$ drastically changes depending on $\mu$. In fact, when $\mu>0$, the operator $A_{\mu}$ is basically patterned after the (possibly) fractional $(p,q)$-Laplacian, which turns out non-homogeneous because $p\neq q$. If $\mu=0$ it coincides with the fractional $p$-Laplacian. Both cases have been widely investigated and meaningful results are by now available in the literature. On the contrary, for $\mu<0$ the operator $A_{\mu}$ contains the \textit{difference} between the fractional $p$- and $q$-Laplacians. It is usually called competitive and, as already pointed out in \cite{LLMZ,Mo}, doesn't comply with any ellipticity or monotonicity condition. In fact, given $u_0\in W^{s,p}_0(\Omega)\setminus\{0\}$ and chosen $u:=\tau u_0$, $\tau>0$, the expression
$$\langle A_{\mu}(u),u\rangle=\tau^p\Vert\nabla u_0\Vert_{s,p}^p
+\mu \tau^q\Vert u_0\Vert_{s,q}^q$$
turns out negative for $\tau$ small and positive when $\tau$ is large. Therefore, nonlinear regularity theory, comparison principles, as well as existence theorems for pseudo-monotone maps cannot be employed. Moreover, since the reactions are multi-valued and contain the fractional gradient of the solutions, also variational techniques are no longer directly usable. To overcome these difficulties we first exploit Galerkin's method, thus working with a sequence $\{E_n\}$ of finite dimensional functional spaces. For each $n\in\N$, an \textit{approximate solution} $(u_{1,n},u_{2,n})\in E_n$ to \eqref{prob} is obtained via a suitable version (cf. Proposition \ref{surthm}) of a classical surjectivity result. Next, letting $n\to+\infty$ yields a solution in a generalized sense (cf. Definitions \ref{Def1} and \ref{Def2}), which turns out weak sense once $\mu_1\wedge\mu_2\geq 0$.

Fractional gradients were first introduced more than sixty years ago by Horváth \cite{H}, but took a great interest especially after the works of Shieh and Spector \cite{SS1,SS2,CS2}. The operator $D^s u$ appears as a natural non-local version of $\nabla u$, to whom it formally tends when $s\to 1^-$. It enjoys good geometric and physical properties \cite{BC,SI}, like invariance under translations or rotations, homogeneity of order $s$, continuity, etc.

Section 2 contains some auxiliary results and the functional framework needed for handling both fractional gradients and the fractional $p$-Laplacian. The existence of (generalized, strong generalized, or weak) solutions to \eqref{prob} is established in Section 3.
\section{Preliminaries}\label{prelim}
Let $X,Y$ be two nonempty sets. A multifunction $\Phi:X\to 2^Y$ is a map from $X$ into the family of all nonempty subsets of $Y$. A function $\varphi:X\to Y$ is called a selection of $\Phi$ when $\varphi(x)\in\Phi(x)$ for every $x\in X$. Given $B\subseteq Y$, put 
$\Phi^-(B):=\{ x\in X\,\mid\,\Phi(x)\cap B\neq\emptyset\}.$
If $X,Y$ are topological spaces and $\Phi^-(B)$ turns out closed in $X$ for all closed sets $B\subseteq Y$ then we say that $\Phi$ is upper semi-continuous. Suppose $(X,\mathcal{F})$ is a measurable space and $Y$ is a topological space. The multifunction $\Phi$ is called measurable when $\Phi^-(B)\in\mathcal{F}$ for every open set $B\subseteq Y$. The result below, stated in \cite[p. 215]{ADZ}, will be repeatedly useful.
\begin{Proposition}\label{supresult}
Let $F:\Omega\times\R^h\to 2^\R$ be a closed-valued multifunction such that:
\begin{itemize}
\item $x\mapsto F(x,\xi)$ is measurable for all $\xi\in\R^h$;
\item $\xi\mapsto F(x,\xi)$ is upper semi-continuous for a.e. $x\in\Omega$.
\end{itemize}
Let $w:\Omega\to\R^h$ be measurable. Then the multifunction $x\mapsto F(x,w(x))$ admits a measurable selection.
\end{Proposition}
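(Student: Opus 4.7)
The plan is to first prove that the composed multifunction $G : \Omega \to 2^{\R}$ defined by $G(x) := F(x, w(x))$ is itself measurable in the sense introduced just above, and then to invoke the classical Kuratowski--Ryll-Nardzewski selection theorem to extract a measurable selection $\varphi$. All the hypotheses of that theorem---closed nonempty values in the Polish space $\R$ and measurability of the multifunction---will then be at hand.

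To establish the measurability of $G$, I would combine Lusin's theorem with the Scorza--Dragoni theorem. For each $n \in \N$, Lusin's theorem applied to $w$ yields a closed set $L_n \subseteq \Omega$ with $|\Omega \setminus L_n| < 1/(2n)$ on which $w$ is continuous. Since $F$ is measurable in $x$, upper semi-continuous in $\xi$, and closed-valued, the Scorza--Dragoni theorem provides a closed set $S_n \subseteq \Omega$, again with $|\Omega \setminus S_n| < 1/(2n)$, such that the restriction of $F$ to $S_n \times \R^h$ is jointly upper semi-continuous. Setting $K_n := L_n \cap S_n$ produces a closed set with $|\Omega \setminus K_n| < 1/n$ on which $x \mapsto (x, w(x))$ is continuous; composed with the jointly u.s.c.\ $F$, the restriction $G|_{K_n}$ is upper semi-continuous with closed values, hence measurable. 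Exhausting $\Omega$ up to a null set by the sequence $\{K_n\}$ and invoking completeness of the Lebesgue measure on $\Omega$, I obtain measurability of $G$ on all of $\Omega$. The Kuratowski--Ryll-Nardzewski theorem then supplies a measurable selection $\varphi : \Omega \to \R$, i.e.\ $\varphi(x) \in F(x, w(x))$ for a.e.\ $x \in \Omega$.

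The main obstacle is precisely the step proving that $G$ itself is measurable, and it is what forces the recourse to Scorza--Dragoni rather than to a direct approximation. A naive attempt would approximate $w$ pointwise a.e.\ by simple measurable functions $w_k = \sum_j \xi_j^{(k)} \chi_{A_j^{(k)}}$---for which $F(\cdot, w_k(\cdot))$ is immediately seen to be measurable, since on each level set $A_j^{(k)}$ it coincides with the measurable $F(\cdot, \xi_j^{(k)})$---and then try to pass to the limit in $k$. The difficulty is that upper semi-continuity is one-sided: it only says that $F(x, w_k(x))$ eventually lies inside any open neighbourhood of $F(x, w(x))$, not that each element of $F(x, w(x))$ is approximated by elements of $F(x, w_k(x))$. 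New elements can therefore appear in the limit that are invisible from the $w_k$-approximations, and the $F$-values may moreover be unbounded. The joint upper semi-continuity furnished by Scorza--Dragoni on each $K_n$ is exactly what sidesteps this asymmetry.
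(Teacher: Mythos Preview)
The paper does not supply its own proof of this proposition: it is quoted as a known fact from \cite[p.~215]{ADZ} and used as a black box throughout. There is therefore no ``paper's proof'' to set your argument against.

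Your reconstruction is a standard and essentially sound route to such a statement. The one step that deserves a more careful citation is the multivalued Scorza--Dragoni theorem you invoke: the commonly quoted versions (e.g.\ in Hu--Papageorgiou) assume \emph{compact} values in the upper semi-continuous case, not merely closed ones, in order to conclude joint upper semi-continuity on a large closed set. In the present paper this is harmless because the $F_i$ are in fact assumed compact-valued, but for the proposition as stated (closed values only) you would need to point to a sharper version. A cleaner alternative that sidesteps this issue entirely is to note that the hypotheses make $F$ an upper-Carath\'eodory multifunction, hence graph-measurable with respect to $\mathcal{L}(\Omega)\otimes\mathcal{B}(\R^h)\otimes\mathcal{B}(\R)$; composing with the measurable map $x\mapsto (x,w(x))$ then gives graph measurability of $G$, and Aumann's projection/selection theorem (or Kuratowski--Ryll-Nardzewski, after passing to measurability of $G$) yields the selection directly. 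Your closing diagnosis of why the naive simple-function approximation fails is accurate.
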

Let $(X,\Vert\cdot\Vert)$ be a real normed space with topological dual $X^*$ and duality brackets $\langle\cdot,\cdot\rangle$. Given a nonempty set $A\subseteq X$, define $|A|:=\sup_{x\in A}\Vert x\Vert$. We say that $\varphi:X\to X^*$ is \emph{monotone} when 
$$\langle\varphi(x)-\varphi(z),x-z\rangle\geq 0\quad\forall\, x,z\in X,$$
and \emph{of type $(\mathrm{S})_+$} provided
\begin{equation*}
x_n\rightharpoonup x\;\;\mbox{in $X$,}\;\;\limsup_{n\to+\infty}\langle\varphi(x_n),x_n-x\rangle \le 0\implies x_n\to x\;\;\mbox{in $X$.}   
\end{equation*}
The next elementary result \cite[Proposition 2.1]{GM-ANONA} ensures that condition $(\mathrm{S})_+$ holds true for the fractional $(p,q)$-Laplacian.
\begin{Proposition}\label{sumop}
Let $\varphi:X\to X^*$ be of type $(\mathrm{S})_+$ and let $\psi:X\to X^*$ be monotone. Then $\varphi+\psi$ satisfies condition $(\mathrm{S})_+$. 
\end{Proposition}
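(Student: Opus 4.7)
The plan is to take a sequence $\{x_n\}\subset X$ with $x_n\rightharpoonup x$ and $\limsup_{n\to+\infty}\langle(\varphi+\psi)(x_n),x_n-x\rangle\le 0$, and deduce $x_n\to x$. The strategy is to peel the monotone part $\psi$ off the sum, reducing the hypothesis to a one about $\varphi$ alone, then invoke the $(\mathrm{S})_+$ property of $\varphi$ directly.

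First, I would exploit monotonicity of $\psi$ in the standard Minty fashion: writing
\begin{equation*}
\langle\psi(x_n),x_n-x\rangle=\langle\psi(x_n)-\psi(x),x_n-x\rangle+\langle\psi(x),x_n-x\rangle,
\end{equation*}
the first summand is nonnegative by monotonicity and the second tends to $0$ since $x_n\rightharpoonup x$. Hence $\liminf_{n\to+\infty}\langle\psi(x_n),x_n-x\rangle\ge 0$.

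Next, I would combine this with the sum assumption. From
\begin{equation*}
\langle\varphi(x_n),x_n-x\rangle=\langle(\varphi+\psi)(x_n),x_n-x\rangle-\langle\psi(x_n),x_n-x\rangle,
\end{equation*}
and the elementary inequality $\limsup(a_n-b_n)\le\limsup a_n-\liminf b_n$, I obtain
\begin{equation*}
\limsup_{n\to+\infty}\langle\varphi(x_n),x_n-x\rangle\le\limsup_{n\to+\infty}\langle(\varphi+\psi)(x_n),x_n-x\rangle-\liminf_{n\to+\infty}\langle\psi(x_n),x_n-x\rangle\le 0.
\end{equation*}
Since $x_n\rightharpoonup x$ and $\varphi$ is of type $(\mathrm{S})_+$, this forces $x_n\to x$ in $X$, concluding the proof.

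There is no real obstacle: the argument is the textbook trick showing that the $(\mathrm{S})_+$ class is stable under addition of monotone perturbations, and the only point requiring a moment's care is the handling of $\limsup$ versus $\liminf$ in the splitting step above.
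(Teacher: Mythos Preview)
Your argument is correct and is precisely the standard proof of this fact. The paper itself does not supply a proof of this proposition; it simply cites \cite[Proposition~2.1]{GM-ANONA}, so there is nothing further to compare.
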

A multifunction $\Phi:X\to 2^{X^*}$ is called coercive provided
$$\lim_{\Vert x\Vert\to\infty}\frac{\inf\{\langle x^*,x\rangle\,\mid\, x\in X,\, x^*\in\Phi(x)\}}{\Vert x\Vert}=+\infty\, .$$
The following result is a direct consequence of~\cite[Proposition 3.2.33]{GP1}. 
\begin{Theorem}\label{surthm}
Let $X$ be a finite-dimensional normed space and let $\Phi:X\to 2^{X^*}$ be a convex compact-valued multifunction. Suppose $\Phi$ is upper semi-continuous and coercive. Then there exists $\hat{x}\in X$ satisfying $0\in\Phi(\hat{x})$. 
\end{Theorem}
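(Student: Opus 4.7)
The plan is to reduce the problem to the homotopy invariance of the Brouwer--Kakutani degree for upper semi-continuous convex compact-valued multifunctions in finite dimension.

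First, since $\dim X<\infty$, I would fix an inner product on $X$ and use the corresponding Riesz isomorphism to identify $X^*$ with $X$; after this identification, $\Phi:X\to 2^X$ is still an upper semi-continuous multifunction with nonempty convex compact values, and the duality pairing coincides with the chosen inner product, which I continue to denote by $\langle\cdot,\cdot\rangle$. By the coercivity hypothesis, I would then choose $R>0$ so large that
$$\langle x^*,x\rangle>0\qquad\forall\,x\in X\text{ with }\|x\|=R,\ \forall\,x^*\in\Phi(x).$$

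Next, I would introduce the homotopy $H:[0,1]\times\bar B_R\to 2^X$ defined by
$$H(t,x):=(1-t)\,x+t\,\Phi(x).$$
Each slice $H(t,\cdot)$ inherits upper semi-continuity and convex compact-valuedness from $\Phi$, and the joint dependence on $(t,x)$ is upper semi-continuous as well. The key boundary check is that $0\notin H(t,x)$ for every $(t,x)\in[0,1]\times\partial B_R$: indeed, if $(1-t)x+tx^*=0$ for some $x^*\in\Phi(x)$ with $\|x\|=R$, then pairing with $x$ yields $(1-t)R^{2}+t\langle x^*,x\rangle=0$, a sum of a nonnegative term and, when $t>0$, a strictly positive one by the choice of $R$, while $t=0$ forces $R=0$; both alternatives are impossible.

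Finally, by the homotopy invariance of the Kakutani degree,
$$\deg(\Phi,B_R,0)=\deg(\mathrm{id}_{\bar B_R},B_R,0)=1\ne 0,$$
so some $\hat x\in B_R$ satisfies $0\in\Phi(\hat x)$, as claimed. The main subtlety here is not the explicit computation but the invocation of a topological degree for set-valued maps in finite dimension; this is standard (see, e.g., Gasinski--Papageorgiou \cite{GP1}) and, thanks to the upper semi-continuity and convex compact-valuedness of $\Phi$, it can be constructed via single-valued continuous approximations that ultimately reduce the matter to the classical Brouwer degree. In our setting, coercivity supplies precisely the boundary estimate required to run this degree-theoretic homotopy, so no additional information on $\Phi$ is needed.
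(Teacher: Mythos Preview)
Your argument is correct. Note, however, that the paper does not supply its own proof of this statement: it simply records the theorem as ``a direct consequence of~\cite[Proposition 3.2.33]{GP1}'' and moves on. Your degree-theoretic proof via the Kakutani/Brouwer homotopy is precisely the kind of argument that underlies such surjectivity results in finite dimension, so in effect you have unpacked the citation rather than taken a genuinely different route. The only (minor) point worth making explicit is that the ball $B_R$ is centered at the origin, so that $0\in B_R$ and $\deg(\mathrm{id},B_R,0)=1$ is indeed available; everything else is in order.
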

Hereafter, if $X$ and $Y$ are two topological spaces, the symbol $X\hookrightarrow Y$ means that $X$ continuously embeds in $Y$. Given $p>1$, put $p':=\frac{p}{p-1}$, denote by $\Vert\cdot\Vert_p$ the usual norm of $L^p(\Omega)$, and  indicate with $\Vert\cdot\Vert_{1,p}$ the norm on $W^{1,p}_0(\Omega)$ arising from Poincaré's inequality, namely
\begin{equation*}
\Vert u\Vert_{1,p}:=\Vert\nabla u\Vert_p\, ,\quad u\in W^{1,p}_0(\Omega).
\end{equation*}
%
%
If $u\in W^{1,p}_0(\Omega)$, we set $u(x)=0$ on $\R^N\setminus\Omega$; cf. \cite[Section 5]{DNPV}. Fix $s\in(0,1)$. The Gagliardo seminorm of a measurable function $u:\R^N\to\R$ is
\begin{equation*}
[u]_{s,p}:=\left(\int_{R^N\times\R^N}\frac{|u(x)-u(y)|^p}{|x-y|^{N+ps}}{\dx}{\dy} \right)^\frac{1}{p},
\end{equation*}
while $W^{s,p}(\R^N)$ denotes the fractional Sobolev space
\begin{equation*}
W^{s,p}(\R^N):= \left\{u\in L^p(\R^N):\ [u]_{s,p}<+\infty \right\},
\end{equation*}
endowed with the norm
\begin{equation*}
\|u\|_{W^{s,p}(\mathbb{R}^N)}:=\left(\|u\|^p_{L^p(\mathbb{R}^N)}+[u]_{s,p}^p\right)^\frac{1}{p}.
\end{equation*}
As usual, on the space
\begin{equation*}
W^{s,p}_0(\Omega):=\{u\in W^{s,p}(\R^N):u=0\;\mbox{a.e. in}\;\R^N\setminus\Omega\}
\end{equation*}
we will consider the equivalent norm
$$\| u\|_{s,p}:=[u]_{s,p},\quad u\in W^{s,p}_0(\Omega).$$
Let $W^{-s,p'}(\Omega):=(W^{s,p}_0(\Omega))^*$ and let $p^*_s$ be the fractional Sobolev critical exponent, i.e., $p^*_s=\frac{Np}{N-sp}$ when $sp<N$, $p^*_s= +\infty$ otherwise. Thanks to Propositions 2.1--2.2, Theorem 6.7, and Corollary 7.2 of \cite{DNPV} one has
\begin{Proposition}\label{fracemb}
If $1\leq p<+\infty$ then:
\begin{itemize}
\item[{\rm (a)}] $0<s'\le s''\le 1\;\implies\; W^{s'',p}_0(\Omega)\hookrightarrow W^{s',p}_0(\Omega)$. 
\item[{\rm (b)}] $W^{s,p}_0(\Omega)\hookrightarrow L^r(\Omega)$ for all $r\in [1, p^*_s]$.
\item[{\rm (c)}] The embedding in {\rm (b)} is compact once $r<p^*_s<+\infty$.
\end{itemize}
\end{Proposition}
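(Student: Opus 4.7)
The three assertions are already established in \cite{DNPV} under more general hypotheses, so my plan is to recall the main ideas behind each, exploiting the fact that $\Omega$ is bounded.

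For part (a), the plan is to compare the Gagliardo seminorms $[u]_{s',p}$ and $[u]_{s'',p}$ of a function $u\in W^{s'',p}_0(\Omega)$ extended by zero outside $\Omega$. I would split the double integral defining $[u]_{s',p}^p$ into the two regions $\{|x-y|\le 1\}$ and $\{|x-y|>1\}$. On the first region, the pointwise bound $|x-y|^{-(N+ps')}\le |x-y|^{-(N+ps'')}$ (valid since $|x-y|\le 1$ and $s'\le s''$) controls that piece by $[u]_{s'',p}^p$. On the second region, the kernel $|x-y|^{-(N+ps')}$ is integrable at infinity, so expanding $|u(x)-u(y)|^p\le 2^{p-1}(|u(x)|^p+|u(y)|^p)$ and applying Fubini yields a multiple of $\|u\|_p^p$. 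A fractional Poincar\'e inequality on the bounded domain $\Omega$, proved directly without invoking the Sobolev embedding, then absorbs the $L^p$ term into $[u]_{s'',p}^p$. When $s''=1$, one additionally uses the standard comparison between the Gagliardo seminorm of order $s'<1$ and the classical Sobolev norm.

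For part (b), when $s<1$ and $sp<N$, I would invoke the fractional Sobolev inequality $\|u\|_{p^*_s}\le C\,[u]_{s,p}$, which can be proved either via the Hardy--Littlewood--Sobolev inequality applied to the Riesz-potential representation of $u$ or, as in \cite{DNPV}, via a direct dyadic-decomposition argument. For intermediate exponents $r\in[1,p^*_s)$, H\"older's inequality combined with the finiteness of $|\Omega|$ reduces the claim to the endpoint case. The situation $s=1$ coincides with the classical Sobolev embedding.

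For part (c), I would apply the Fr\'echet--Kolmogorov compactness criterion in $L^p(\Omega)$: controlling the $L^p$ modulus of continuity of a bounded subset of $W^{s,p}_0(\Omega)$ via its Gagliardo seminorm yields relative compactness of the embedding $W^{s,p}_0(\Omega)\hookrightarrow L^p(\Omega)$. Combining this with the uniform bound in $L^{p^*_s}(\Omega)$ coming from part (b) and the interpolation inequality $\|v\|_r\le\|v\|_p^\theta\|v\|_{p^*_s}^{1-\theta}$ for a suitable $\theta\in(0,1)$ upgrades the $L^p$-compactness to compactness in every $L^r(\Omega)$ with $r<p^*_s$.

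The main technical obstacle is the fractional Sobolev inequality $\|u\|_{p^*_s}\le C[u]_{s,p}$ for $s<1$: unlike the classical case, where one can truncate and iterate using the chain rule, here one must handle a genuinely non-local seminorm, and the proof requires either Riesz-potential machinery or the more elementary but still delicate dyadic argument of \cite{DNPV}. Once that inequality is in hand, every other step in the proposition follows by the routine arguments sketched above.
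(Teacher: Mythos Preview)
Your proposal is correct, but you should be aware that the paper itself offers no proof at all for this proposition: the authors simply write ``Thanks to Propositions 2.1--2.2, Theorem 6.7, and Corollary 7.2 of \cite{DNPV} one has'' and then state the result. So there is nothing to compare against beyond the bare citation.

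What you have done is sketch the actual arguments from \cite{DNPV} that the paper invokes, and your sketches are accurate: the splitting $\{|x-y|\le 1\}\cup\{|x-y|>1\}$ for (a) is exactly the device used in Propositions~2.1--2.2 there; the fractional Sobolev inequality via the dyadic argument is Theorem~6.7; and the Fr\'echet--Kolmogorov route to compactness, followed by interpolation to reach every $r<p^*_s$, is the content of Theorem~7.1/Corollary~7.2. Your identification of the critical Sobolev inequality as the one genuinely non-trivial step is also on point. In short, you have supplied considerably more than the paper does, and what you supply is correct and faithful to the cited source.
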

However, contrary to the non-fractional case, we know \cite{MS} that
$$1\leq q<p\leq+\infty\;\;\centernot\implies W^{s,p}_0(\Omega) \subseteq W^{s,q}_0(\Omega).$$
Define, for every $u,v\in W^{s,p}_0(\Omega)$,
\begin{equation*}
\langle(-\Delta)^s_p u,v\rangle:=
\int_{\R^N\times\R^N}\frac{|u(x)-u(y)|^{p-2}(u(x)-u(y))(v(x)-v(y))}{|x-y|^{N+ps}}\dx\dy\, .
\end{equation*}
The operator $(-\Delta)_p^s$ is called (negative) $s$-fractional $p$-Laplacian. It possesses the following properties.
\begin{itemize}
\item[$({\rm p}_1)$] $(-\Delta)^s_p:W^{s,p}_0(\Omega)\rightarrow W^{-s,p'}(\Omega)$ turns out monotone, continuous, and of type $({\rm S})_+$; vide, e.g., \cite[Lemma 2.1]{FI}.
\item[$({\rm p}_2)$] One has
$$\Vert(-\Delta)^s_p u\Vert_{W^{-s,p'}(\Omega)}\leq\Vert u\Vert_{s,p}^{p-1} 
\quad\forall\, u\in W^{s,p}_0(\Omega).$$
Hence, $(-\Delta)^s_p$ maps bounded sets into bounded sets.
\item[$({\rm p}_3)$] The first eigenvalue $\lambda_{1,p,s}$ of $(-\Delta)^s_p$ is given by (cf. \cite{LL})
\begin{equation*}
\lambda_{1,p,s}=\inf_{u \in W_0^{s,p}(\Omega),u \neq 0}
\frac{\|u\|_{s,p}^p}{\|u\|_p^p}\, .
\end{equation*}
\end{itemize}
To deal with distributional fractional gradients, we first introduce the Bessel potential spaces $L^{\alpha,p}(\R^N)$, where $\alpha>0$. Set, for every $x\in\R^N$,
\begin{equation*}
g_\alpha(x):= \frac{1}{(4 \pi)^\frac{\alpha}{2}\Gamma\left(\frac{\alpha}{2}\right)} \int_{0}^{+\infty}e^{\frac{-\pi|x|^2}{\delta}} e^{\frac{-\delta}{4\pi}}\delta^{\frac{\alpha-N}{2}}\frac{{\rm d}\delta}{\delta}\, .  
\end{equation*}
On account of \cite[Section 7.1]{MI} one can assert that:
\begin{itemize}
\item[1)] $g_\alpha\in L^1(\R^N)$ and $\|g_\alpha\|_{L^1(\R^N)}=1$.
\item[2)] $g_\alpha$ enjoys the semi-group property, i.e., $g_\alpha \ast g_\beta= g_{\alpha+\beta}$ for any $\alpha,\beta>0$, with $*$ being the convolution operator.
\end{itemize}
Now, put
\begin{equation*}
L^{\alpha,p}(\R^N):=\{u:\, u= g_\alpha\ast\tilde u\;\mbox{for some}\;\tilde u\in L^p(\R^N)\}
\end{equation*}
as well as
$$\|u\|_{L^{\alpha,p}(\R^N)}= \|\tilde u\|_{ L^p(\R^N)}\;\;\mbox{whenever}\;\; u=g_\alpha\ast\tilde u.$$ 
Using 1) and 2) we easily get
$$0<\alpha<\beta\;\implies\; L^{\beta,p}(\R^N)\subseteq L^{\alpha,p}(\R^N)\subseteq L^p(\R^N).$$
Moreover, by \cite[Theorem 2.2]{SS1}, one has 
\begin{Theorem}\label{besselspace} 
If $1<p<+\infty$ and $0<\eps<\alpha$ then
\begin{equation*}
L^{\alpha+\eps,p}(\R^N)\hookrightarrow W^{\alpha,p}(\R^N)\hookrightarrow L^{\alpha-\eps,p}(\R^N).
\end{equation*}
\end{Theorem}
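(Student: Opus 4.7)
The plan is Fourier-analytic. Since $\widehat{g_\alpha}(\xi)=(1+|\xi|^2)^{-\alpha/2}$, the defining condition for $L^{\alpha,p}(\R^N)$ is equivalent to the Bessel characterization $u\in L^{\alpha,p}(\R^N)\iff (I-\Delta)^{\alpha/2}u\in L^p(\R^N)$, with equivalent norms, while $W^{\alpha,p}(\R^N)$ admits an equivalent Littlewood--Paley description in terms of dyadic frequency blocks of $u$. The two scales coincide exactly only when $p=2$, via Plancherel; for other values of $p$ they genuinely differ but are comparable up to an arbitrarily small loss of smoothness, which is exactly what the theorem asserts.

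For the first embedding, given $u=g_{\alpha+\varepsilon}\ast\tilde u$ with $\tilde u\in L^p(\R^N)$, Young's inequality together with $\|g_{\alpha+\varepsilon}\|_{L^1(\R^N)}=1$ immediately yields $\|u\|_p\le\|\tilde u\|_p$. To control the Gagliardo seminorm $[u]_{\alpha,p}$, I would perform a Littlewood--Paley decomposition of $u$ and exploit the fact that the extra $\varepsilon$-order smoothness provided by the $L^{\alpha+\varepsilon,p}$ norm produces geometrically decaying factors $2^{-j\varepsilon}$ on each frequency shell, which dominate the Besov-type aggregation encoded by the Gagliardo seminorm. Equivalently, one may invoke the Mikhlin--H\"ormander multiplier theorem on a symbol of the form $|\xi|^\alpha(1+|\xi|^2)^{-(\alpha+\varepsilon)/2}$ to bridge the gap between the Triebel--Lizorkin-type norm on $L^{\alpha,p}$ and the Besov-type Gagliardo seminorm on $W^{\alpha,p}$.

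For the reverse embedding, given $u\in W^{\alpha,p}(\R^N)$, one must exhibit $\tilde u\in L^p(\R^N)$ with $u=g_{\alpha-\varepsilon}\ast\tilde u$, i.e., show that $(I-\Delta)^{(\alpha-\varepsilon)/2}u\in L^p(\R^N)$. Factoring $(I-\Delta)^{(\alpha-\varepsilon)/2}=(I-\Delta)^{-\varepsilon/2}(I-\Delta)^{\alpha/2}$, the extra smoothing operator $(I-\Delta)^{-\varepsilon/2}$ compensates the discrepancy between the two scales; a Mikhlin multiplier or Bernstein-type estimate at the level of frequency shells then closes the argument, the $\varepsilon$-gain being precisely what converts the $\ell^p$ aggregation of frequency blocks into the $\ell^2$ aggregation demanded by the Bessel norm.

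The hard part is the quantitative comparison of Besov-type and Triebel--Lizorkin-type aggregation of frequency pieces: when $p=2$ they agree by Plancherel and no loss is needed, whereas for $p\ne 2$ the $\varepsilon$-gain is structural and encodes the strict inclusion between the two scales. Verifying the Mikhlin--H\"ormander hypotheses on the relevant symbols, and rigorously carrying out the frequency decomposition so as to extract the decisive $2^{-j\varepsilon}$ factor, is where the technical work concentrates; once that machinery is in place both inclusions reduce to standard $L^p$ multiplier bounds combined with Young's inequality and the semigroup property $g_\alpha\ast g_\beta=g_{\alpha+\beta}$ recorded in Section~\ref{prelim}.
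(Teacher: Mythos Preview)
The paper does not prove this statement at all: it is merely quoted from \cite[Theorem~2.2]{SS1} (Shieh--Spector), so there is no ``paper's own proof'' to compare against. Your Fourier-analytic outline is a reasonable route to the result and is in the spirit of how such embeddings are established in the standard function-space literature (e.g.\ Triebel): identifying $L^{\alpha,p}=F^{\alpha}_{p,2}$ and $W^{\alpha,p}=B^{\alpha}_{p,p}$ for non-integer $\alpha$, then using the elementary Besov/Triebel--Lizorkin comparisons $B^{s}_{p,\min(p,2)}\hookrightarrow F^{s}_{p,2}\hookrightarrow B^{s}_{p,\max(p,2)}$ together with the fact that an $\varepsilon$-loss in smoothness erases any discrepancy in the secondary index. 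Your sketch captures exactly this mechanism via the $2^{-j\varepsilon}$ geometric gain on frequency shells.

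Two minor cautions. First, the passage where you invoke Mikhlin--H\"ormander on the symbol $|\xi|^{\alpha}(1+|\xi|^{2})^{-(\alpha+\varepsilon)/2}$ would control a Riesz-type fractional derivative rather than the Gagliardo seminorm directly; to reach $[u]_{\alpha,p}$ you still need the Besov identification $W^{\alpha,p}=B^{\alpha}_{p,p}$ (or an equivalent difference-quotient characterization), so that step is not quite self-contained as written. Second, your proposal is explicitly a plan rather than a proof: the Littlewood--Paley machinery, the equivalence of norms, and the multiplier verifications are all deferred. For the purposes of this paper that is entirely appropriate, since the authors themselves treat the theorem as a black box imported from \cite{SS1}; but if you intend this as a standalone argument you would need to either carry out those verifications or cite the relevant chapters of a function-space monograph.
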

Finally, given $s\in (0,1)$, define
\begin{equation*}
L^{s,p}_0(\Omega):=\{u\in L^{s,p}(\R^N): u=0\;\text{a.e. in}\;\R^N\setminus\Omega \}.
\end{equation*}
Thanks to Theorem \ref{besselspace} we infer
\begin{equation}\label{comparisonbound}
L^{s+\eps,p}_0(\Omega)\hookrightarrow W^{s,p}_0(\Omega)
\hookrightarrow L^{s-\eps,p}_0(\Omega)\quad\forall\,\eps\in (0,s).
\end{equation}
The next basic notion is taken from \cite{SS1}. For $0<\alpha<N$, let
\begin{equation*}
\gamma(N,\alpha):=\frac{\Gamma\left(\frac{N-\alpha}{2}\right)}{\pi^\frac{N}{2} 2^\alpha\Gamma\left(\frac{\alpha}{2}\right)},\quad 
I_\alpha(x):=\frac{\gamma(N,\alpha)}{|x|^{N-\alpha}},\quad x\in\R^N\setminus\{0\}.
\end{equation*}
If $u\in L^p(\R^N)$ and $I_{1-s}\ast u$ makes sense then the vector
$$D^s u:=\left(\frac{\partial}{\partial x_1}(I_{1-s}\ast u),\ldots,
\frac{\partial}{\partial x_N}(I_{1-s}\ast u)\right),$$
where partial derivatives are understood in a distributional sense, is called distributional Riesz $s$-fractional gradient of $u$. Theorem 1.2 in \cite{SS1} ensures that
$$D^s u=I_{1-s}\ast Du\quad\forall\, u\in C^\infty_c(\R^N).$$
Further, $D^s u$ looks like the natural extension of $\nabla u$ to the fractional framework, In fact, it exhibits analogous properties and, roughly speaking, $D^s u\to\nabla u$ when $s\to 1^-$; see, e.g., \cite[Section 2]{GMM-FCAA}.

According to \cite[Definition 1.5]{SS1}, $X^{s,p}(\R^N)$ denotes the completion of $C^\infty_c(\R^N)$ with respect to the norm
$$\|u\|_{X^{s,p}(\R^N)}:=\left(\|u\|_{L^p(\R^N)}^p+\|D^s u\|_{L^p(\R^N)}^p\right)^\frac{1}{p}.$$
Since, by \cite[Theorem 1.7]{SS1}, $X^{s,p}(\R^N)=L^{s,p}(\R^N)$ we can deduce
many facts about $X^{s,p}(\R^N)$ from the existing literature on $L^{s,p}(\R^N)$. Moreover, if
\begin{equation*}
X^{s,p}_{0}(\Omega):=\{u \in X^{s,p}(\R^N):u=0\;\text{a.e. in}\;\R^N\setminus\Omega\},
\end{equation*}
then $X^{s,p}_{0}(\Omega)=L^{s,p}_{0}(\Omega)$.  
\section{Existence results}
To shorten notation, for $i=1,2$, we set $U_i:=W^{s_i,p_i}_0(\Omega)$ and denote by $\langle\cdot,\cdot\rangle_i$ the duality brackets of $U_i$. Lemma 2.6 in \cite{BP} guarantees that
\begin{equation} \label{emb3}
U_i:=W^{s_i,p_i}_0(\Omega)\hookrightarrow W^{t_i,q_i}_0(\Omega).
\end{equation}
Hence, the  differential operator $u\mapsto (-\Delta)^{s_i}_{p_i} u+\mu_i
(-\Delta)^{t_i}_{q_i} u$ turns out well-defined on $U_i$. Let $A_{s_i,t_i}:U_i\to U^*_i$ be given by
\begin{equation*}
\begin{split}
\langle A_{s_i,t_i}(u),v\rangle_i
&:=\int_{\R^N\times\R^N}
\frac{|u(x)-u(y)|^{p_i-2}(u(x)-u(y))(v(x)-v(y))}{|x-y|^{N+p_i s_i}}\dx\,\dy\\
&+\mu_i\int_{\R^N\times\R^N}
\frac{|u(x)-u(y)|^{q_i-2}(u(x)-u(y))(v(x)-v(y))}{|x-y|^{N+q_i t_i}}\dx\,\dy
\end{split}
\end{equation*}
for every $u,v\in U_i$. Thanks to properties $({\rm p}_1)$--$({\rm p}_2)$ stated in Section \ref{prelim}, $A_{s_i,t_i}$ is bounded and continuous. Consequently, 
\begin{Lemma}\label{propA}
Under $({\rm H}_1)$, the operator $A:U_1\times U_2\to U^*_1\times U^*_2$  defined by
\begin{equation*} 
A(u_1,u_2):=(A_{s_1,t_1}(u_1),A_{s_2,t_2}(u_2))\;\;\forall\, (u_1,u_2)\in U_1\times U_2
\end{equation*}
maps bounded sets into bounded sets and is continuous.
\end{Lemma}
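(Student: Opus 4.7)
My plan is to reduce the statement to properties of each component map $A_{s_i,t_i}:U_i\to U_i^*$ and then assemble these into the product-space conclusion. Because $A(u_1,u_2)=(A_{s_1,t_1}(u_1),A_{s_2,t_2}(u_2))$ is defined componentwise, boundedness of $A$ on bounded subsets and continuity of $A$ on $U_1\times U_2$ (equipped with the product norm) follow immediately once the same is established for $A_{s_1,t_1}$ and $A_{s_2,t_2}$ separately; so I fix $i\in\{1,2\}$ and work with $A_{s_i,t_i}=(-\Delta)^{s_i}_{p_i}+\mu_i(-\Delta)^{t_i}_{q_i}$.

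For the leading $(-\Delta)^{s_i}_{p_i}$ summand, property $({\rm p}_2)$ applied with $s=s_i$, $p=p_i$ directly yields $\|(-\Delta)^{s_i}_{p_i}u\|_{U_i^*}\le\|u\|_{s_i,p_i}^{p_i-1}$, and property $({\rm p}_1)$ supplies the continuity. The lower-order $(-\Delta)^{t_i}_{q_i}$ summand is the only piece that requires care, since it naturally takes values in $W^{-t_i,q_i'}(\Omega)$ rather than in $U_i^*$. To pass from one to the other I would invoke \eqref{emb3}, which provides a constant $c_i>0$ with $\|v\|_{t_i,q_i}\le c_i\|v\|_{s_i,p_i}$ for every $v\in U_i$; combined with property $({\rm p}_2)$ at $(t_i,q_i)$ and the duality pairing, this delivers
\[
|\langle(-\Delta)^{t_i}_{q_i}u,v\rangle_i|\le\|u\|_{t_i,q_i}^{q_i-1}\|v\|_{t_i,q_i}\le c_i^{q_i}\,\|u\|_{s_i,p_i}^{q_i-1}\,\|v\|_{s_i,p_i},
\]
hence $\|(-\Delta)^{t_i}_{q_i}u\|_{U_i^*}\le c_i^{q_i}\|u\|_{s_i,p_i}^{q_i-1}$, so boundedness on bounded subsets follows. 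Continuity is analogous: if $u_n\to u$ in $U_i$ then \eqref{emb3} forces $u_n\to u$ in $W^{t_i,q_i}_0(\Omega)$, property $({\rm p}_1)$ at $(t_i,q_i)$ yields $(-\Delta)^{t_i}_{q_i}u_n\to(-\Delta)^{t_i}_{q_i}u$ in $W^{-t_i,q_i'}(\Omega)$, and the dual of \eqref{emb3} transports this convergence to $U_i^*$.

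The only real subtlety I anticipate is the well-definedness of the lower-order term as a $U_i^*$-valued operator: since $q_i<p_i$ there is no naive containment $W^{t_i,q_i}_0(\Omega)\subseteq W^{s_i,p_i}_0(\Omega)$, and the whole bookkeeping rests on the embedding \eqref{emb3} provided by hypothesis $({\rm H}_1)$. Once this identification is in place, the remaining steps are routine applications of $({\rm p}_1)$ and $({\rm p}_2)$ summand by summand, and the product-space conclusion for $A$ is then immediate.
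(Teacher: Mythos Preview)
Your proposal is correct and follows the same approach as the paper, which simply asserts (in the sentence immediately preceding the lemma) that ``Thanks to properties $({\rm p}_1)$--$({\rm p}_2)$ stated in Section~\ref{prelim}, $A_{s_i,t_i}$ is bounded and continuous'' and then records the lemma as a consequence. You have merely filled in the details the paper leaves implicit, in particular the use of the embedding~\eqref{emb3} to identify the lower-order term $(-\Delta)^{t_i}_{q_i}$ as a bounded continuous operator into $U_i^*$.
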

Next, put, provided $(u_1,u_2)\in U_1\times U_2$,
\begin{equation*}
\begin{split}\mathcal{S}_{F_1,F_2}(u_1,u_2):=\{ (w_1,w_2) &
\in L^{({p_1^*})'}(\Omega)\times L^{(p^*_2)'}(\Omega):\\
& w_i(\cdot)\in F_i(\cdot,u_1,u_2,D^{r_1} u_1,D^{r_2} u_2)\mbox{ a.e. in }\Omega,\; i=1,2\},  
\end{split}
\end{equation*}
with $p^*_i$ as in \eqref{defpstar}.
\begin{Lemma}\label{propS}
Let $({\rm H}_1)$--$({\rm H}_3)$ be satisfied. Then:
\begin{itemize}
\item[$({\rm a}_1)$] $\mathcal{S}_{F_1,F_2}(u_1,u_2)$ turns out nonempty, convex, closed for all $(u_1,u_2)\in U_1\times U_2$.
\item[$({\rm a}_2)$] The multifunction $\mathcal{S}_{F_1,F_2}:U_1\times U_2\to 2^{L^{({p_1^*})'}(\Omega)\times L^{(p^*_2)'}(\Omega)}$ is bounded and strongly-weakly upper semi-continuous.
\end{itemize}
\end{Lemma}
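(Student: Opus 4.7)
The plan is to split the argument: I would first settle $({\rm a}_1)$ by producing a measurable selection via Proposition \ref{supresult} and controlling its size through $({\rm H}_3)$, and then prove $({\rm a}_2)$ by combining uniform boundedness with sequential closedness of the graph (strong topology on $U_1\times U_2$, weak topology on the target Lebesgue product), using Mazur's lemma as the key tool to convert the weak convergence of the selections into pointwise control.

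For $({\rm a}_1)$, I would fix $(u_1,u_2)\in U_1\times U_2$ and begin by noting that, combining \eqref{comparisonbound} with any $\eps\in(0,s_i-r_i)$ and the identity $X^{r_i,p_i}(\R^N)=L^{r_i,p_i}(\R^N)$, the map $D^{r_i}:U_i\to L^{p_i}(\R^N)^N$ is linear and continuous, so $x\mapsto(u_1(x),u_2(x),D^{r_1}u_1(x),D^{r_2}u_2(x))$ is measurable. Proposition \ref{supresult} then produces a measurable selection $w_i$ of $F_i(\cdot,u_1(\cdot),u_2(\cdot),D^{r_1}u_1(\cdot),D^{r_2}u_2(\cdot))$, while $({\rm H}_3)$ and Proposition \ref{fracemb}(b) force $w_i\in L^{(p_i^*)'}(\Omega)$; hence $\mathcal{S}_{F_1,F_2}(u_1,u_2)\neq\emptyset$. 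Convexity is inherited from that of the values of each $F_i$, while closedness in $L^{(p_1^*)'}(\Omega)\times L^{(p_2^*)'}(\Omega)$ is routine: a strongly convergent sequence admits a subsequence converging a.e., and closedness of the values of $F_i$ preserves the pointwise inclusion in the limit.

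For $({\rm a}_2)$, boundedness is immediate from $({\rm H}_3)$ and the embeddings just recalled. Since $\mathcal{S}_{F_1,F_2}$ has closed convex values in a reflexive Banach space and is locally bounded, the strong-weak upper semi-continuity will follow once I show that its graph is sequentially closed for the strong-in-$U_1\times U_2$ and weak-in-$L^{(p_1^*)'}(\Omega)\times L^{(p_2^*)'}(\Omega)$ topologies. So let $u^n\to u$ strongly in $U_1\times U_2$ and $w^n\rightharpoonup w$ weakly in the target, with $w^n\in\mathcal{S}_{F_1,F_2}(u^n)$. Extract subsequences so that $u_i^n\to u_i$ a.e. (Proposition \ref{fracemb}(c)) and, using continuity of $D^{r_i}$, also $D^{r_i}u_i^n\to D^{r_i}u_i$ in $L^{p_i}(\R^N)^N$ and, after one more pass, a.e. Mazur's lemma then supplies convex combinations $\hat w^n_i=\sum_{k\geq n}\alpha_{n,k}w^k_i$, with $\alpha_{n,k}\geq 0$ and $\sum_k\alpha_{n,k}=1$, such that $\hat w^n\to w$ in norm, and a final subsequence gives $\hat w^n\to w$ a.e.

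Fix $x$ in the full-measure set on which all the previous pointwise convergences hold, and let $\eta>0$. Upper semi-continuity of $F_i(x,\cdot,\cdot)$ from $({\rm H}_2)$ provides $K\in\N$ such that
$$F_i(x,u_1^k(x),u_2^k(x),D^{r_1}u_1^k(x),D^{r_2}u_2^k(x))\subseteq F_i(x,u_1(x),u_2(x),D^{r_1}u_1(x),D^{r_2}u_2(x))+\eta[-1,1]$$
for every $k\geq K$. Since the right-hand side is convex and $\hat w^n_i(x)$ is a convex combination of the $w^k_i(x)$ for $k\geq n\geq K$, it follows that $\hat w^n_i(x)\in F_i(x,u_1(x),u_2(x),D^{r_1}u_1(x),D^{r_2}u_2(x))+\eta[-1,1]$ for all $n\geq K$; letting $n\to+\infty$ and then $\eta\to 0^+$, closedness of $F_i(x,\cdot,\cdot)$ forces $w_i(x)\in F_i(x,u_1(x),u_2(x),D^{r_1}u_1(x),D^{r_2}u_2(x))$ a.e., i.e.\ $w\in\mathcal{S}_{F_1,F_2}(u)$. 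The main obstacle is precisely this concluding pointwise step: one must orchestrate three successive subsequence extractions (from the compact embedding, continuity of $D^{r_i}$, and Mazur) and crucially exploit the convexity of the $\eta$-thickening of $F_i$ to reconcile weak convergence of $w^n$ with the pointwise upper semi-continuity of the Nemytskii-type composition.
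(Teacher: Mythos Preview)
Your proposal is correct and follows essentially the same route as the paper: the embedding $W^{s_i,p_i}_0\hookrightarrow L^{r_i,p_i}_0$ to make sense of $D^{r_i}u_i$, Proposition~\ref{supresult} together with $({\rm H}_3)$ for nonemptiness and boundedness, and Mazur's lemma combined with the pointwise upper semi-continuity in $({\rm H}_2)$ for the closedness/usc part. The only cosmetic difference is that for $({\rm a}_2)$ you argue via sequential closedness of the graph (legitimate here thanks to local boundedness and reflexivity of the target), whereas the paper verifies directly that $\mathcal{S}_{F_1,F_2}^-(B)$ is closed for every weakly closed $B$; the underlying Mazur-plus-$({\rm H}_2)$ argument is identical, and your final pointwise step with the $\eta$-thickening is in fact more explicit than the paper's ``this easily entails''.
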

\begin{proof}
Since $r_i<s_i$, if $\eps\in (0,s_i-r_i)$, combining Proposition \ref{fracemb} with \eqref{comparisonbound} yields
\begin{equation*}
W^{s_i,p_i}_0(\Omega)\hookrightarrow W^{r_i+\eps,p_i}_0(\Omega)
\hookrightarrow L^{r_i,p_i}_0(\Omega).
\end{equation*}
Thus,
$$(u_1,u_2)\in U_1\times U_2\implies (D^{r_1}u_1,D^{r_2}u_2)\in L^{p_1}(\Omega)\times
L^{p_2}(\Omega).$$
Now, pick any $(u_1,u_2)\in U_1\times U_2$. Through $({\rm H_2})$ and Proposition \ref{supresult} we realize that $F_i(\cdot,u_1,u_2,D^{r_1}u_1,D^{r_2}u_2)$ admits a measurable selection $w_i:\Omega\to\R$. By $({\rm H}_3)$ one has
\begin{align*}
\| w_1\|_{(p_1^*)'}^{(p_1^*)'} 
& \le\int_\Omega\Big[ m_1\Big(|u_1|^{p_1^*-1}+|u_2|^\frac{p_2^*}{(p_1^*)'}
+|D^{r_1} u_1|^\frac{p_1}{(p_1^*)'}+|D^{r_2} u_2|^\frac{p_2}{(p_1^*)'}\Big)
+\delta_1\Big]^{(p_1^*)'}\,{\rm d}x\\
& \le (m_1+1)\Big(\|\delta_1\|_{(p_1^*)'}^{(p_1^*)'}
+\|u_1\|_{p_1^*}^{p_1^*}+\|u_2\|^{p_2^*}_{p_2^*}
+\|D^{r_1} u_1\|_{p_1}^{p_1}+\|D^{r_2} u_2\|_{p_2}^{p_2}\Big)<+\infty
\end{align*}
as well as $\| w_2\|_{(p_2^*)'}<+\infty$. Hence, $\mathcal{S}_{F_1,F_2}(u_1,u_2)\neq \emptyset$. This proves $({\rm a}_1)$, because convexity and closing follow at once from the analogous properties of $F_i$. 

Let us next verify $({\rm a}_2)$. The above inequalities also guarantee that $\mathcal{S}_{F_1,F_2}$ maps bounded sets into bounded sets. If $B$ is a nonempty weakly closed subset of $L^{(p_1^*)'}(\Omega)\times L^{(p_2^*)'}(\Omega)$ while $\{(u_{1,n},u_{2,n})\} \subseteq\mathcal{S}_{F_1,F_2}^-(B)$ converges to $(u_1,u_2)$ in $U_1\times U_2$, then $\{(u_{1,n},u_{2,n})\}\subseteq U_1\times U_2$ turns out bounded. The same holds true concerning the set
$$\bigcup_{n\in\N}\mathcal{S}_{F_1,F_2}(u_{1,n},u_{2,n})\subseteq L^{(p_1^*)'}(\Omega)\times L^{(p_2^*)'}(\Omega).$$
So, up to sub-sequences, there exists $(w_{1,n},w_{2,n})\in\mathcal{S}_{F_1,F_2}(u_{1,n},u_{2,n})\cap B$, $n\in\mathbb{N}$, such that
$$(w_{1,n},w_{2,n})\rightharpoonup(w_1,w_2)\quad\mbox{in}\quad
L^{(p_1^*)'}(\Omega)\times L^{(p_2^*)'}(\Omega).$$
One evidently has $(w_1,w_2)\in B$, because $B$ is weakly closed. Mazur's principle provides a sequence $\{(\Tilde{w}_{1,n},\Tilde{w}_{2,n})\}$ of convex combinations of $\{(w_{1,n}, w_{2,n})\}$ satisfying
$$(\Tilde{w}_{1,n},\Tilde{w}_{2,n})\to (w_1,w_2)\;\;\mbox{in}\;\;
L^{(p_1^*)'}(\Omega)\times L^{(p_2^*)'}(\Omega).$$
By $({\rm H_2})$, this easily entails
$$w_i(x)\in F_i(x,u_1(x),u_2(x),D^{r_1} u_1(x),D^{r_2} u_2(x))\;\;\mbox{for a.e.}\;\; x\in \Omega,\; i=1,2.$$
Consequently, $(w_1,w_2)\in\mathcal{S}_{F_1,F_2}(u_1,u_2)\cap B$, i.e., $(u_1,u_2)\in\mathcal{S}_{F_1,F_2}^-(B)$, as desired. 
\end{proof}
Our existence result can be established after introducing some suitable constants and the notion of generalized solution to \eqref{prob}. Since $r_i<s_i$, $i=1,2$, embeddings \eqref{comparisonbound} produce
\begin{equation}\label{gradest}
\Vert D^{r_1} u_1\Vert_{p_1}^{p_1}\leq\hat{c}_1\Vert u_1\Vert_{s_1,p_1}^{p_1}\;\;\forall\, u_1\in U_1,\quad
\Vert D^{r_2} u_2\Vert_{p_2}^{p_2}\leq\hat{c}_2\Vert u_2\Vert_{s_2,p_2}^{p_2}\;\;\forall\, u_2\in U_2,
\end{equation}
with appropriate $\hat{c}_i>0$. Via \eqref{emb3} and its analogue for couples $(s_2,p_2)$--$(t_2,q_2)$ we next have
\begin{equation}\label{tisi}
\Vert u_1\Vert_{t_1,q_1}^{q_1}\leq\Tilde{c}_1\Vert u_1\Vert_{s_1,p_1}^{p_1}\;\;\forall\, u_1\in U_1,\quad 
\Vert u_2\Vert_{t_2,q_2}^{q_2}\leq\Tilde{c}_2\Vert u_2\Vert_{s_2,p_2}^{p_2}\;\;\forall\, u_2\in U_2,
\end{equation}
where $\Tilde{c}_i>0$. Finally, set
$$\langle (u_1,u_2),(v_1,v_2)\rangle:=\langle u_1,v_1\rangle_1+\langle u_2,v_2\rangle_2,\quad (u_1,u_2)\in U_1\times U_2,\; (v_1,v_2)\in U^*_1\times U^*_2.$$
\begin{Definition}\label{Def1}
We say that $(u_1,u_2)\in U_1\times U_2$ is a generalized solution of \eqref{prob} if there exist two sequences 
$(u_{1,n},u_{2,n})\in U_1\times U_2$ and $(w_{1,n},w_{2,n})\in\mathcal{S}_{F_1,F_2}(u_{1,n},u_{2,n})$ fulfilling: 
\begin{itemize}
\item[{\rm(i)}] $(u_{1,n},u_{2,n})\rightharpoonup (u_1,u_2)$ in $U_1\times U_2$;
\item[{\rm(ii)}] $A(u_{1,n},u_{2,n})-(w_{1,n},w_{2,n})\rightharpoonup 0$ in $U_1^*\times U_2^*$;
\item[\rm(iii)] $\displaystyle{\lim_{n\to\infty}}\langle A(u_{1,n},u_{2,n})-(w_{1,n},w_{2,n}),(u_{1,n}-u_1,u_{2,n}-u_2)\rangle=0$.
\end{itemize} 
\end{Definition}
\begin{Theorem}\label{firstthm}
If $({\rm H_1})$--$({\rm H_4})$ are satisfied and, moreover,
\begin{equation}\label{ineq}
\frac{M_1+M_2}{\lambda_{1,p_i,s_i}}+\hat{c}_i(M_1'+M_2')+|\Tilde{c}_i|\mu_i|<1,\;\; i=1,2,   \end{equation}
then Problem \eqref{prob} admits a generalized solution.
\end{Theorem}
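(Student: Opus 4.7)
The plan is to use Galerkin's method. Since each $U_i$ is separable and reflexive, I would select a non-decreasing sequence of finite-dimensional subspaces $E_n:=E_{1,n}\times E_{2,n}\subset U_1\times U_2$ with $\bigcup_n E_n$ dense. Writing $j_n:E_n\hookrightarrow U_1\times U_2$ for the inclusion and $j_n^*:U_1^*\times U_2^*\to E_n^*$ for its adjoint, I would study the multifunction
$$
\Phi_n(u_1,u_2):=\bigl\{j_n^*(A(u_1,u_2)-(w_1,w_2)):(w_1,w_2)\in\mathcal{S}_{F_1,F_2}(u_1,u_2)\bigr\},\quad(u_1,u_2)\in E_n,
$$
whose zeros, paired with their witnesses, give Galerkin approximate solutions fulfilling
$$
\langle A(u_{1,n},u_{2,n})-(w_{1,n},w_{2,n}),(v_1,v_2)\rangle=0\quad\forall\,(v_1,v_2)\in E_n.
$$
To invoke Theorem \ref{surthm}, convex compact-valuedness follows from Lemma \ref{propS}$({\rm a}_1)$ and the finite-dimensionality of $E_n^*$; upper semi-continuity is checked sequentially, by extracting a weak cluster point of any sequence of witnesses via Lemma \ref{propS}$({\rm a}_2)$ and then identifying the limit thanks to Lemma \ref{propA}.

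The core step, and what I expect to be the main obstacle, is the coercivity estimate, which is precisely what hypothesis $({\rm H}_4)$ together with inequality \eqref{ineq} is calibrated for. For $(u_1,u_2)\in U_1\times U_2$ and $(w_1,w_2)\in\mathcal{S}_{F_1,F_2}(u_1,u_2)$, I would combine the expansion
$$
\langle A(u_1,u_2)-(w_1,w_2),(u_1,u_2)\rangle\geq\sum_{i=1,2}\Bigl(\|u_i\|_{s_i,p_i}^{p_i}-|\mu_i|\,\|u_i\|_{t_i,q_i}^{q_i}-\int_\Omega w_i u_i\,\dx\Bigr)
$$
with $({\rm H}_4)$, property $({\rm p}_3)$ in the form $\|u_i\|_{p_i}^{p_i}\leq\lambda_{1,p_i,s_i}^{-1}\|u_i\|_{s_i,p_i}^{p_i}$, inequality \eqref{gradest}, and embedding \eqref{tisi}, arriving at
$$
\langle A(u_1,u_2)-(w_1,w_2),(u_1,u_2)\rangle\geq\sum_{i=1,2}\kappa_i\|u_i\|_{s_i,p_i}^{p_i}-C_0,
$$
where $\kappa_i:=1-\Tilde{c}_i|\mu_i|-\tfrac{M_1+M_2}{\lambda_{1,p_i,s_i}}-\hat{c}_i(M_1'+M_2')>0$ by \eqref{ineq} and $C_0$ depends only on $\|\sigma_1+\sigma_2\|_1$. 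Since $p_i>1$, this delivers both coercivity of $\Phi_n$---so Theorem \ref{surthm} supplies the approximate solutions above---and, on testing the Galerkin identity with $(u_{1,n},u_{2,n})$ itself, the uniform a priori bound $\sum_i\kappa_i\|u_{i,n}\|_{s_i,p_i}^{p_i}\leq C_0$.

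Reflexivity then yields, along a subsequence, $(u_{1,n},u_{2,n})\rightharpoonup(u_1,u_2)$ in $U_1\times U_2$, which is condition (i) of Definition \ref{Def1}. By Lemma \ref{propS}$({\rm a}_2)$ and property $({\rm p}_2)$, both $\{(w_{1,n},w_{2,n})\}$ and $\{A(u_{1,n},u_{2,n})\}$ are bounded in the appropriate dual spaces, so $A(u_{1,n},u_{2,n})-(w_{1,n},w_{2,n})$ admits a weak cluster point $g\in U_1^*\times U_2^*$. For (ii), every $(v_1,v_2)\in\bigcup_m E_m$ satisfies $\langle g,(v_1,v_2)\rangle=0$ upon passing to the limit in the Galerkin identity; density of $\bigcup_m E_m$ forces $g=0$, and a standard subsequence argument extends this to the whole sequence. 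Finally, condition (iii) follows from the decomposition
$$
\langle A(u_{1,n},u_{2,n})-(w_{1,n},w_{2,n}),(u_{1,n}-u_1,u_{2,n}-u_2)\rangle=-\langle A(u_{1,n},u_{2,n})-(w_{1,n},w_{2,n}),(u_1,u_2)\rangle,
$$
because the term paired with $(u_{1,n},u_{2,n})\in E_n$ is killed by the Galerkin identity, while the remaining one tends to $0$ by (ii).
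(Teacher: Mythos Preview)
Your proposal is correct and follows essentially the same route as the paper: Galerkin approximation on finite-dimensional subspaces, verification that the restricted multifunction $A-\mathcal{S}_{F_1,F_2}$ meets the hypotheses of Theorem~\ref{surthm} (with coercivity derived from $({\rm H}_4)$, $({\rm p}_3)$, \eqref{gradest}, and \eqref{tisi} exactly as you outline), extraction of the a priori bound by testing against $(u_{1,n},u_{2,n})$, and the passage to the limit via density to obtain (ii) and the splitting identity to obtain (iii). The only cosmetic differences are that the paper does not insist on the product form $E_n=E_{1,n}\times E_{2,n}$ and takes a single coercivity constant $\alpha=\min_i\kappa_i$ rather than keeping your two $\kappa_i$ separate.
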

\begin{proof}
The space $U_1\times U_2$ is separable, therefore it possesses a Galerkin's basis, namely a sequence $\{E_n\}$ of linear sub-spaces of $U_1\times U_2$ such that:
\begin{itemize}
\item[$({\rm i}_1)$] ${\rm dim}(E_n)<\infty\;\;\forall\, n\in\mathbb{N}$;
\item[$({\rm i}_2)$] $E_n\subseteq E_{n+1}\;\;\forall\, n\in\mathbb{N}$;
\item[$({\rm i}_3)$] $\overline{\cup_{n=1}^{\infty}E_n}=U_1\times U_2$.
\end{itemize}
Pick any $n\in\mathbb{N}$. Consider the problem: Find $(u_1,u_2)\in E_n$ fulfilling
\begin{equation}\label{eqnss3.2}
A(u_1,u_2)-\mathcal{S}_{F_1,F_2}(u_1,u_2)\ni 0\;\;\mbox{in}\;\; E_n^*.
\end{equation}
By Lemma \ref{propS} the multifunction
$$(A-\mathcal{S}_{F_1,F_2})\lfloor_{E_n}:E_n\to 2^{E_n^*}$$
takes convex closed values, maps bounded sets into bounded sets, and is upper semi- continuous. If $(u_1,u_2)\in U_1\times U_2$ and $(w_1,w_2)\in\mathcal{S}_{F_1,F_2}(u_1,u_2)$ then, thanks to $({\rm H}_4)$,  we have
\begin{align*}
\langle A(u_1,u_2) & -(w_1,w_2),(u_1,u_2)\rangle\geq
\|u_1\|_{s_1,p_1}^{p_1}+\|u_2\|_{s_2,p_2}^{p_2}
-|\mu_1|\|u_1\|_{t_1,q_1}^{q_1}-|\mu_2|\|u_2\|_{t_2,q_2}^{q_2}\\
& - \int_\Omega\left[ M_1(|u_1|^{p_1}+|u_2|^{p_2})+M_1'(|D^{r_1} u_1|^{p_1}+|D^{r_2} u_2|^{p_2})+\sigma_1\right]\dx\\
& -\int_\Omega\left[ M_2(|u_1|^{p_1}+|u_2|^{p_2})+M_2'(|D^{r_1} u_1|^{p_1}+|D^{r_2} u_2|^{p_2})+\sigma_2\right]\dx.
\end{align*}
Using $({\rm p}_3)$ yields
\begin{align*}
\langle & A(u_1,u_2)-(w_1,w_2),(u_1,u_2)\rangle\\
& \geq\left(1-\frac{M_1+M_2}{\lambda_{1,p_1,s_1}}\right)\|u_1\|_{s_1,p_1}^{p_1}
+\left(1-\frac{M_1+M_2}{\lambda_{1,p_2,s_2}}\right)\| u_2\|_{s_2,p_2}^{p_2}
-|\mu_1|\| u_1\|_{t_1,q_1}^{q_1}-|\mu_2|\|u_2\|_{t_2,q_2}^{q_2}\\
& \phantom{ppp}-\int_\Omega (M_1'+M_2')|D^{r_1} u_1|^{p_1}\dx
-\int_\Omega (M_1'+M_2')|D^{r_2} u_2|^{p_2}\dx-\|\sigma_1\|_1- \|\sigma_2\|_1, 
\end{align*}
whence, on account of \eqref{gradest},
\begin{align*}
\langle & A(u_1,u_2)-(w_1,w_2),(u_1,u_2)\rangle\\
&\geq\left[1-\frac{M_1+M_2}{\lambda_{1,p_1,s_1}}-\hat{c}_1(M_1'+M_2')\right] \|u_1\|_{s_1,p_1}^{p_1}
+\left[1-\frac{M_1+M_2}{\lambda_{1,p_2,s_2}}-\hat{c}_2(M_1'+M_2')\right]
\|u_2\|_{s_2,p_2}^{p_2}\\
& \phantom{ppp}-|\mu_1|\| u_1\|_{t_1,q_1}^{q_1}-|\mu_2|\|u_2\|_{t_2,q_2}^{q_2}
-\Vert\sigma_1\Vert_1-\Vert\sigma_2\Vert_1.
\end{align*}
Finally, through \eqref{tisi} we obtain
\begin{align*}
& \langle A(u_1,u_2)-(w_1,w_2),(u_1,u_2)\rangle\\
& \geq\left[ 1-\frac{M_1+M_2}{\lambda_{1,p_1,s_1}}-\hat{c}_1(M_1'+M_2')
-|\mu_1|\Tilde{c}_1\right] \|u_1\|_{s_1,p_1}^{p_1}\\
& +\left[1-\frac{M_1+M_2}{\lambda_{1,p_2,s_2}}-\hat{c}_2(M_1'+M_2')
-|\mu_2|\Tilde{c}_2\right]\|u_2\|_{s_2,p_2}^{p_2}
-\Vert\sigma_1\Vert_1-\Vert\sigma_2\Vert_1\\
& \geq\min_{i=1,2}\left[1-\frac{M_1+M_2}{\lambda_{1,p_i,s_i}}-\hat{c}_i(M_1'+M_2')
-|\mu_i|\Tilde{c}_i\right] \left(\|u_1\|_{s_1,p_1}^{p_1}+\|u_2\|_{s_2,p_2}^{p_2}\right)
-\Vert\sigma_1\Vert_1-\Vert\sigma_2\Vert_1,
\end{align*}
namely
\begin{equation}\label{coerc}
\langle A(u_1,u_2)-(w_1,w_2),(u_1,u_2)\rangle\geq\alpha \left(\|u_1\|_{s_1,p_1}^{p_1}+\|u_2\|_{s_2,p_2}^{p_2}\right)-\beta,    
\end{equation}
where
$$\alpha:=\min_{i=1,2}\left[1-\frac{M_1+M_2}{\lambda_{1,p_i,s_i}}-\hat{c}_i(M_1'+M_2')
-|\mu_i|\Tilde{c}_i\right], \quad\beta:=\Vert\sigma_1\Vert_1+\Vert\sigma_2\Vert_1.$$
Since \eqref{ineq} holds, the multifunction $(A-\mathcal{S}_{F_1,F_2})\lfloor_{E_n}$ turns out coercive. Now, Theorem~\ref{surthm} can be applied, and there exists a solution $(u_{1,n},u_{2,n})\in E_n$ to Problem \eqref{eqnss3.2}, i.e.,
\begin{equation}\label{eqnss3.4}
A(u_{1,n},u_{2,n})-(w_{1,n},w_{2,n})= 0\;\;\mbox{in}\;\; E^*_n 
\end{equation}
for suitable $(w_{1,n},w_{2,n})\in\mathcal{S}_{F_1,F_2}(u_{1,n},u_{2,n})$. From \eqref{coerc}, written with $(u_1,u_2):=(u_{1,n},u_{2,n})$, and \eqref{eqnss3.4} it follows
$$0\geq\alpha\left(\|u_{1,n}\|_{s_1,p_1}^{p_1}+\| u_{2,n}\|_{s_2,p_2}^{p_2}\right)-\beta\quad\forall\, n\in\mathbb{N}.$$
Thus, $\{(u_{1,n},u_{2,n})\}\subseteq U_1\times U_2$ is bounded. By reflexivity one has $(u_{1,n},u_{2,n})\rightharpoonup (u_1,u_2)$ in $U_1\times U_2$, taking a sub-sequence when necessary. Consequently, (i) of Definition \ref{Def1} holds. Through Lemma 6 and \eqref{eqnss3.4} we next infer that $\{(w_{1,n},w_{2,n})\}\subseteq L^{(p_1^*)'}(\Omega)\times L^{(p_3^*)'}(\Omega)$ turns out bounded. Therefore, always up to sub-sequences,
\begin{equation}\label{weaklim}
A(u_{1,n},u_{2,n})-(w_{1,n},w_{2,n})\rightharpoonup (\varphi_1,\varphi_2)\;\;\mbox{in} \;\; U_1^*\times U_2^*.
\end{equation}
Now, given any $(v_1,v_2)\in \cup_{n=1}^\infty E_n$, Property $({\rm i}_2)$ and \eqref{eqnss3.4} yield
$$\langle (\varphi_1,\varphi_2),(v_1,v_2)\rangle
=\lim_{n\to\infty}\langle A(u_{1,n},u_{2,n})-(w_{1,n},w_{2,n}),(v_1,v_2)\rangle=0.$$
Because of $({\rm i}_3)$ this forces 
\begin{equation}\label{zerosol}
(\varphi_1,\varphi_2)= 0\;\;\mbox{in}\;\; U_1^*\times U_2^*,    
\end{equation}
namely, condition (ii) is true. Using \eqref{eqnss3.4}--\eqref{zerosol} entails
\begin{equation}\label{lateruse}
\begin{split}
\langle A(u_{1,n},u_{2,n})-(w_{1,n},w_{2,n}), & (u_{1,n}-u_1,u_{2,n}-u_2)\rangle\\
& =-\langle A(u_{1,n},u_{2,n})-(w_{1,n},w_{2,n}),(u_1,u_2)\rangle\to 0
\end{split}
\end{equation}
as $n\to\infty$, which shows (iii) in Definition \ref{Def1}. Summing up, the pair $(u_1,u_2)$ turns out a generalized solution to \eqref{prob}.
\end{proof}
If we strengthen $({\rm H}_3)$ as follows:
\begin{itemize}
\item[$({\rm H}_3)'$] For each $i=1,2$ there exist $\rho_i,\sigma_i\in (1,p^*_i)$, $m_i>0$, and $\delta_i\in L^{\sigma_i'}(\Omega)$ such that
\begin{align*}
|F_i(x,y_1,y_2,z_1,z_2)|
\le & m_i \left(|y_1|^\frac{p^*_1}{\rho_i'}+|y_2|^\frac{p_2^*}{\rho_i'}
+|z_1|^\frac{p_1}{\rho_i'}+|z_2|^\frac{p_2}{\rho_i'}\right)+\delta_i(x)
\end{align*}
a.e. in $\Omega$ and for all $(y_1,y_2,z_1,z_2)\in\R^2\times\R^{2N}$,
\end{itemize}
then the next notion of strongly generalized solution can be given. Obviously, $({\rm H}_3)'$ implies $({\rm H}_3)$, because $\rho_i<p^*_i$ forces
\begin{equation*}
\frac{\kappa}{\rho'_i}<\frac{\kappa}{(p^*_i)'}\quad\forall\, \kappa\in
\{ p_1,p_2,p^*_1,p^*_2\}. 
\end{equation*}
\begin{Definition}\label{Def2}
We say that $(u_1,u_2)\in U_1\times U_2$ is a strongly generalized solution to \eqref{prob} if there exist two sequences 
$(u_{1,n},u_{2,n})\in U_1\times U_2$ and $(w_{1,n},w_{2,n})\in\mathcal{S}_{F_1,F_2}(u_{1,n},u_{2,n})$ satisfying (i) and (ii) of Definition \ref{Def1} and, moreover,
\begin{itemize}
\item[${\rm(iii)}'$] $\displaystyle{\lim_{n\to\infty}}\langle A(u_{1,n},u_{2,n}),(u_{1,n}-u_1,u_{2,n}-u_2)\rangle=0$.
\end{itemize} 
\end{Definition}
\begin{Theorem}\label{thmtwo}
Under assumptions $({\rm H}_1)$--$({\rm H}_2)$, $({\rm H}_3)'$, $({\rm H}_4)$, and \eqref{ineq}, Problem \eqref{prob} admits a strongly generalized solution.
\end{Theorem}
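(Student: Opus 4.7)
The plan is to re-use the sequences produced by Theorem~\ref{firstthm} (applicable because $({\rm H}_3)'\Rightarrow({\rm H}_3)$, as remarked in the excerpt) and to upgrade condition (iii) to $({\rm iii})'$ by showing that
\begin{equation*}
\lim_{n\to\infty}\langle (w_{1,n},w_{2,n}),(u_{1,n}-u_1,u_{2,n}-u_2)\rangle=0.
\end{equation*}
Once this is granted, adding to (iii) of Definition~\ref{Def1} (which already holds) yields $({\rm iii})'$ of Definition~\ref{Def2}, and the proof is complete.

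The point where $({\rm H}_3)'$ is crucial is the following sharper integrability of the selections. Raising the growth bound in $({\rm H}_3)'$ to the power $\rho_i'$ and integrating over $\Omega$ gives, via the boundedness of $\{u_{i,n}\}$ in $U_i$ (hence in $L^{p_i^*}(\Omega)$ by Proposition~\ref{fracemb}(b)) together with \eqref{gradest} and the assumption on $\delta_i$ (if $\sigma_i>\rho_i$, invoke $L^{\sigma_i'}(\Omega)\hookrightarrow L^{\rho_i'}(\Omega)$ as $\Omega$ is bounded), that $\{w_{i,n}\}$ is bounded in $L^{\rho_i'}(\Omega)$. On the other hand, since $\rho_i<p_i^*$, Proposition~\ref{fracemb}(c) furnishes the compact embedding $U_i\hookrightarrow L^{\rho_i}(\Omega)$, so from $u_{i,n}\rightharpoonup u_i$ in $U_i$ (i.e.\ (i) of Definition~\ref{Def1}) one gets $u_{i,n}\to u_i$ strongly in $L^{\rho_i}(\Omega)$.

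Combining these two facts, Hölder's inequality yields
\begin{equation*}
|\langle w_{i,n},u_{i,n}-u_i\rangle|\leq \|w_{i,n}\|_{\rho_i'}\,\|u_{i,n}-u_i\|_{\rho_i}\longrightarrow 0,\qquad i=1,2,
\end{equation*}
which is precisely the desired limit. Writing
\begin{equation*}
\langle A(u_{1,n},u_{2,n}),(u_{1,n}-u_1,u_{2,n}-u_2)\rangle
= \langle A(u_{1,n},u_{2,n})-(w_{1,n},w_{2,n}),(u_{1,n}-u_1,u_{2,n}-u_2)\rangle
+\langle (w_{1,n},w_{2,n}),(u_{1,n}-u_1,u_{2,n}-u_2)\rangle,
\end{equation*}
and letting $n\to\infty$, the first summand vanishes by (iii) and the second by the argument above. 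This establishes $({\rm iii})'$.

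The main obstacle, and the only genuinely new ingredient with respect to Theorem~\ref{firstthm}, is book-keeping the integrability of $\{w_{i,n}\}$: one must verify that $\rho_i<p_i^*$ simultaneously buys both the better $L^{\rho_i'}$-bound on the selections (through the polynomial part of $({\rm H}_3)'$ and the auxiliary inclusion $L^{\sigma_i'}(\Omega)\hookrightarrow L^{\rho_i'}(\Omega)$ for the inhomogeneous part) and compactness of the embedding $U_i\hookrightarrow L^{\rho_i}(\Omega)$. Everything else — existence of the approximating sequences, the weak limits (i) and (ii) — is inherited verbatim from the proof of Theorem~\ref{firstthm}.
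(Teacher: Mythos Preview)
Your overall strategy coincides with the paper's: inherit the sequences and conditions (i)--(iii) from Theorem~\ref{firstthm}, then show $\langle(w_{1,n},w_{2,n}),(u_{1,n}-u_1,u_{2,n}-u_2)\rangle\to 0$ and add this to (iii) to obtain $({\rm iii})'$. However, your treatment of the inhomogeneous term $\delta_i$ contains an error. You claim that when $\sigma_i>\rho_i$ one has $L^{\sigma_i'}(\Omega)\hookrightarrow L^{\rho_i'}(\Omega)$; but $\sigma_i>\rho_i$ forces $\sigma_i'<\rho_i'$, and on a bounded domain the inclusion of Lebesgue spaces runs the \emph{opposite} way. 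Hence, in general, you cannot conclude $\delta_i\in L^{\rho_i'}(\Omega)$, and the claimed uniform $L^{\rho_i'}$-bound on $\{w_{i,n}\}$ may fail.

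The paper sidesteps this by not seeking a single Lebesgue bound on $w_{i,n}$. It estimates the pairing directly, splitting the growth condition of $({\rm H}_3)'$ into its polynomial and inhomogeneous parts:
\begin{equation*}
\left|\int_\Omega w_{i,n}(u_{i,n}-u_i)\,\dx\right|\le C\,\|u_{i,n}-u_i\|_{\rho_i}+\|\delta_i\|_{\sigma_i'}\,\|u_{i,n}-u_i\|_{\sigma_i}.
\end{equation*}
Since $({\rm H}_3)'$ requires both $\rho_i$ and $\sigma_i$ to lie in $(1,p_i^*)$, the compact embedding of Proposition~\ref{fracemb}(c) yields $u_{i,n}\to u_i$ in $L^{\rho_i}(\Omega)\cap L^{\sigma_i}(\Omega)$, so both summands vanish. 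With this adjustment your argument goes through and matches the paper's.
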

\begin{proof}
Reasoning as in the proof of Theorem \ref{firstthm}  yields both $(u_1,u_2)\in U_1\times U_2$ and two sequences $(u_{1,n},u_{2,n})\in U_1\times U_2$, $(w_{1,n},w_{2,n})\in \mathcal{S}_{F_1,F_2}(u_{1,n},u_{2,n})$ that comply with (i)--(ii) in Definition \ref{Def1} as well as \eqref{lateruse}. Thus, it remains to show ${\rm(iii)}'$. By $({\rm H}_3)'$ and H\"older's inequality we have
\begin{equation*}
\begin{split}
& \left|\int_\Omega w_{i,n}(u_{i,n}-u_i)\dx\right|\\
&\le m_i\int_\Omega\left(|u_{1,n}|^{\frac{p_1^*}{\rho_i'}} +|u_{2,n}|^{\frac{p_2^*}{\rho_i'}}+|\nabla u_{1,n}|^{\frac{p_1}{\rho_i'}}
+|\nabla u_{2,n}|^{\frac{p_2}{\rho_i'}}\right)|u_{i,n}-u_i|\dx
+\int_\Omega\delta_i|u_{i,n}-u_i|\dx\\
&\le m_i\left(\|u_{1,n}\|^{\rho_i'}_{p_1^*}+\|u_{2,n}\|^{\rho_i'}_{p_2^*}
+\|u_{1,n}\|^{\rho_i'}_{1,p_1}+\|u_{2,n}\|^{\rho_i'}_{1,p_2}\right)
\|u_{i,n}-u_i\|_{\rho_i}+\|\delta_i\|_{\sigma_i'}\|u_{i.n}-u_i\|_{\sigma_i}\\
&\le C\|u_{i,n}-u_i\|_{\rho_i}+\|\delta_i\|_{\sigma_i'}\|u_{i,n}-u_i\|_{\sigma_i} \quad\forall\, n\in\N,
\end{split}	
\end{equation*}
because $\{u_{i,n}\}\subseteq U_i$ turns out bounded. The condition $\rho_i\vee\sigma_i<p_i^*$, then, forces $u_{i,n}\to u_i$ in $L^{\rho_i}(\Omega)\cap 
L^{\sigma_i}(\Omega)$, where a sub-sequence is considered if necessary; see Proposition \ref{fracemb}. Hence,
\begin{equation}\label{supop}
\lim_{n\to\infty}\int_\Omega w_{i,n}(u_{i,n}-u_i)\dx=0,\quad i=1,2.  
\end{equation}
Through \eqref{lateruse}--\eqref{supop}, we arrive at
\begin{equation*}
\lim_{n\to\infty}\langle A(u_{1,n},u_{2,n}),(u_{1,n}-u_1,u_{2,n}-u_2)\rangle=0,
\end{equation*}
namely $({\rm iii})'$ of Definition \ref{Def2} also holds.
\end{proof}
Finally, recall that $(u_1,u_2)\in U_1\times U_2$ is called a \textit{weak solution} to \eqref{prob} when there exists $(w_1,w_2)\in\mathcal{S}_{F_1,F_2}(u_1,u_2)$ such that
\begin{equation}\label{wsol}
A(u_1,u_2)=(w_1,w_2)\;\;\mbox{in}\;\; U_1^*\times U_2^*.    
\end{equation}
\begin{Corollary}
Let the hypotheses of Theorem \ref{thmtwo} be satisfied and let $\mu_1\wedge\mu_2\geq 0$. Then Problem \eqref{prob} possesses a weak solution. 
\end{Corollary}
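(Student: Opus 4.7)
Theorem \ref{thmtwo} yields a strongly generalized solution $(u_1,u_2)\in U_1\times U_2$, together with sequences $(u_{1,n},u_{2,n})\in U_1\times U_2$ and $(w_{1,n},w_{2,n})\in\mathcal{S}_{F_1,F_2}(u_{1,n},u_{2,n})$ enjoying (i), (ii) of Definition \ref{Def1} and $({\rm iii})'$ of Definition \ref{Def2}. The plan is to upgrade this to a weak solution by first showing that $(u_{1,n},u_{2,n})\to(u_1,u_2)$ strongly in $U_1\times U_2$, and then passing to the limit in \eqref{eqnss3.4} using Lemma \ref{propS}.

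Since $\mu_1\wedge\mu_2\geq 0$, each component $A_{s_i,t_i}$ decomposes as the sum of the fractional $p_i$-Laplacian (which is of type $({\rm S})_+$ by property $({\rm p}_1)$) and the nonnegative multiple $\mu_i(-\Delta)^{t_i}_{q_i}$ of a monotone operator. Proposition \ref{sumop} then guarantees that $A_{s_i,t_i}$ is of type $({\rm S})_+$ on $U_i$, and a straightforward componentwise argument ensures that $A:U_1\times U_2\to U_1^*\times U_2^*$ is of type $({\rm S})_+$ on the product space. Combining this with (i) and $({\rm iii})'$ immediately gives the strong convergence $(u_{1,n},u_{2,n})\to(u_1,u_2)$ in $U_1\times U_2$. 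In particular, Lemma \ref{propA} yields $A(u_{1,n},u_{2,n})\to A(u_1,u_2)$ in $U_1^*\times U_2^*$.

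By Lemma \ref{propS}, $({\rm a}_2)$, the sequence $\{(w_{1,n},w_{2,n})\}$ is bounded in $L^{(p_1^*)'}(\Omega)\times L^{(p_2^*)'}(\Omega)$, so, up to a subsequence, $(w_{1,n},w_{2,n})\rightharpoonup(w_1,w_2)$ weakly. Property (ii) in Definition \ref{Def1} together with the strong convergence of $\{A(u_{1,n},u_{2,n})\}$ forces $(w_1,w_2)=A(u_1,u_2)$ in $U_1^*\times U_2^*$.

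It remains to identify $(w_1,w_2)$ with an element of $\mathcal{S}_{F_1,F_2}(u_1,u_2)$. Since $\mathcal{S}_{F_1,F_2}$ is convex closed valued (Lemma \ref{propS}, $({\rm a}_1)$) and strongly--weakly upper semi-continuous (Lemma \ref{propS}, $({\rm a}_2)$), its graph is closed in $(U_1\times U_2)_{\mathrm{strong}}\times(L^{(p_1^*)'}(\Omega)\times L^{(p_2^*)'}(\Omega))_{\mathrm{weak}}$. The pair $((u_{1,n},u_{2,n}),(w_{1,n},w_{2,n}))$ lies in this graph and converges to $((u_1,u_2),(w_1,w_2))$ in the relevant topologies, whence $(w_1,w_2)\in\mathcal{S}_{F_1,F_2}(u_1,u_2)$. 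Combined with $A(u_1,u_2)=(w_1,w_2)$, this is exactly \eqref{wsol}, so $(u_1,u_2)$ is a weak solution of \eqref{prob}.

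The only delicate point is the first step: without $\mu_1,\mu_2\geq 0$ the operator $A$ need not be of type $({\rm S})_+$ (cf.\ the competitive discussion in the Introduction), and one can no longer promote weak to strong convergence of $(u_{1,n},u_{2,n})$; the remainder is then a routine graph-closedness argument supplied by Lemma \ref{propS}.
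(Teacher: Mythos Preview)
Your proof is correct and follows essentially the same route as the paper's: use $\mu_i\ge 0$ together with $({\rm p}_1)$ and Proposition~\ref{sumop} to make $A$ of type $({\rm S})_+$, upgrade (i) and $({\rm iii})'$ to strong convergence, invoke Lemma~\ref{propA} and the boundedness part of Lemma~\ref{propS} to pass to the limit in (ii), and conclude \eqref{wsol}. The only difference is cosmetic: you spell out explicitly, via the strong--weak closed graph of $\mathcal{S}_{F_1,F_2}$, why the weak limit $(w_1,w_2)$ lands in $\mathcal{S}_{F_1,F_2}(u_1,u_2)$, whereas the paper absorbs this into the phrase ``through (ii) and Lemma~\ref{propA} we easily arrive at \eqref{wsol}''.
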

\begin{proof}
Keep the same notation of the previous proof. Since $\mu_i\geq 0$, gathering $({\rm p}_1)$ with Proposition \ref{sumop} together ensures that $A_{s_i,t_i}$ is of type $({\rm S})_+$.
Therefore, from $({\rm iii})'$ it follows $(u_{1,n},u_{2,n})\to (u_1,u_2)$ in $U_1\times U_2$. On the other hand, $({\rm a}_2)$ in Lemma \ref{propS} produces, up to subsequences, $(w_{1,n},w_{2,n})\rightharpoonup (w_1,w_2)$ in $U_1^*\times U_2^*$. Now, through $({\rm ii})$ and Lemma \ref{propA} we easily arrive at \eqref{wsol}.
\end{proof}
\section*{Acknowledgment}
This work was supported in part by: 1) the Research project of MIUR Prin 2022 {\it Nonlinear differential problems with applications to real phenomena} (Grant No. 2022ZXZTN2); 2) the Natural Science Foundation of Guangxi (Grant Nos. 2021GXNSFFA196004 and 2024GXNSFBA010337); 3) the National Natural Science Foundation of China (Grant No. 12371312); 4) the Natural Science Foundation of Chongqing (Grant No. CSTB2024 NSCQ-JQX0033); 5) the  Postdoctoral Fellowship Program of CPSF (Grant No. GZC20241534); 6) the Systematic Project of Center for Applied Mathematics of Guangxi (Yulin Normal University) (Grant Nos. 2023CAM002 and 2023CAM003).

The second author is a member of the GNAMPA of INdAM.

\end{document}